\documentclass[11pt]{article}

\oddsidemargin=0.25in
\evensidemargin=0.25in 
\textwidth=6.0in
\topmargin=0in
\headheight=0in
\textheight=8.5in
\headsep=0in

\usepackage{amsmath}
\usepackage{amsthm}

\theoremstyle{plain}
\newtheorem{thm}{Theorem}
\newtheorem{lem}{Lemma}

\newtheorem{con}{Conjecture}

\theoremstyle{definition}
\newtheorem{defn}{Definition}

\newcommand{\cell}[2]{\ensuremath{[#1,\: #2]}}

\usepackage{graphicx}
%\graphicspath{%
%    {converted_graphics/}
%    {/}
%}

\usepackage[implicit=false,colorlinks=true,urlcolor=red]{hyperref}

\begin{document}

\title{The Pascal Rhombus and the Stealth Configuration}   
\author{Paul K. Stockmeyer\\Department of Computer Science\\The College of William and Mary\\Williamsburg, Virginia, USA%
\\ Email: \href{mailto:stockmeyer@cs.wm.edu}{stockmeyer@cs.wm.edu}}
\date{} 
\maketitle

\begin{abstract}
The Pascal rhombus is a variant of Pascal's triangle in which each term is a sum of four earlier terms.  
Klostermeyer et al.\ made
four conjectures about the Pascal rhombus modulo 2.  In this paper we show how exploration of the stealth shape leads to
unified proofs of all of these conjectures.
\end{abstract}

\section{Introduction}     
\label{intro}
The Pascal rhombus was introduced in 1997 by Klostermeyer {\it et al.}~\cite{Klost} as a variant of Pascal's triangle.  
The term rhombus does not refer to this figure's shape---as with Pascal's triangle, the Pascal rhombus forms an infinite triangular wedge.  Rather, the term refers to the rule for recursively constructing the figure. 
Informally, each element in the
Pascal rhombus is the sum of three adjacent elements in the row immediately above the element, plus one element from two rows above.
The new element, together with the four elements that contribute to it, lie in the shape of a rhombus.
The figure begins with a single 1 in the top row.  Figure \ref{rhom} shows the first six rows of the Pascal rhombus.

\begin{figure}[h!]
\centerline{\includegraphics[width=3.3in,height=2.1in,keepaspectratio]{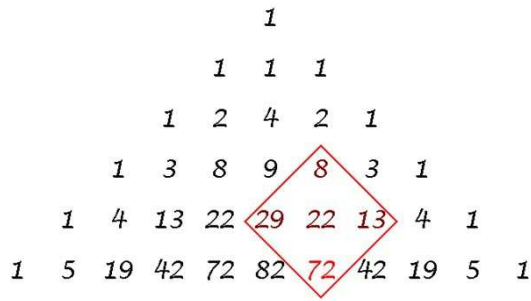}}
\caption{The Pascal rhombus.}
\label{rhom}
\end{figure}

More formally, we number the rows and columns of the Pascal rhombus, with \cell{n}{k} denoting the cell in  
row $n$, column $k$, and $R\cell{n}{k}$ denoting its contents.  
The non-zero elements are those for which $-n< k < n$.  The initial conditions are that $R\cell{0}{j}=0$ for all $j$, 
$R\cell{1}{j} = 0$ for $j\ne0$, and 
$R\cell{1}{0}=1$.  Construction
continues with
\begin{equation*}
R\cell{n}{k} = R\cell{n-1}{k-1} + R\cell{n-1}{k} + R\cell{n-1}{k+1} + R\cell{n-2}{k}
\end{equation*}
for $n\ge 2$. 
(Note that this indexing is different from 
that of \cite{Klost}.)

Near the end of \cite{Klost} the authors consider the Pascal rhombus with entries reduced modulo 2. 
Figure \ref{mod2} shows the first
32 rows of the Pascal rhombus (mod 2), with odd entries in black and even entries in white. The construction rule now becomes:
A cell at the bottom of a 5-cell rhombus is colored black if and only if an odd number of the other four cells are colored black.
Thus every 5-cell rhombus contains an even number of black cells.  The only exception to this rule in the entire infinite grid is
the rhombus centered at cell \cell{0}{0} which contains the single black cell at position \cell{1}{0} that initiates 
the construction.

\begin{figure}[h!]
\centerline{\includegraphics[width=3.89in,height=2.1in,keepaspectratio]{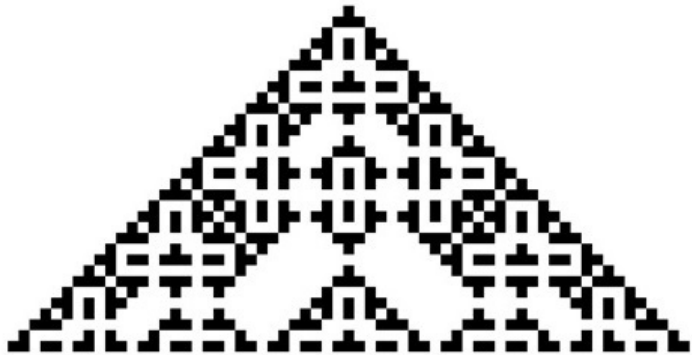}}
\caption{The Pascal rhombus (mod 2)}
\label{mod2}
\end{figure}

This is of course quite reminiscent of the well-known Pascal's triangle (mod 2), where the first $2^n$ rows form a structure
consisting of three copies of the $2^{n-1}$ row structure.  Properly scaled, these structures approach the fractal
known as the Sierpi\'{n}ski gasket.  See, for example, \cite[Section 1.2]{edgar}.
No doubt motivated by the properties of this familiar triangle, Klostermeyer {\it et al.}~\cite{Klost}
made four conjectures about the structure of the Pascal rhombus (mod 2).  

\begin{con}[Klostermeyer {\it et al.}\ \cite{Klost}]\label{con1}  For any $n\ge 1$
the triangle in the Pascal rhombus {\rm (mod 2)} with corners at \cell{1}{0}, \cell{2^{n-1}}{ -(2^{n-1}-1)}, 
and \cell{2^{n-1}}{2^{n-1}-1}
is identical to the triangle with corners at \cell{2^n+1}{-2^n}, \cell{2^n + 2^{n-1}}{-(2^n + 2^{n-1}-1)},
and \cell{2^n + 2^{n-1}}{-(2^n - 2^{n-1}+1)},  and also identical to the triangle with corners at
\cell{2^n+1}{2^n}, \cell{2^n + 2^{n-1}}{ 2^n - 2^{n-1}+1},
and \cell{2^n + 2^{n-1}}{2^n + 2^{n-1}-1}.
\end{con}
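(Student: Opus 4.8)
The plan is to pass to the row polynomials modulo $2$ and exploit two ``doubling'' identities. For $n\ge0$ let $f_n(x)=\sum_k R\cell{n}{k}\,x^k\in\mathbf{F}_2[x,x^{-1}]$, so that $f_0=0$, $f_1=1$, and the defining recurrence reads $f_n=g\,f_{n-1}+f_{n-2}$ for $n\ge2$, where $g=g(x)=x^{-1}+1+x$. Two facts are immediate: each $f_n$ is supported on exponents in $[-(n-1),n-1]$, and $f_n(x^{-1})=f_n(x)$ (an easy induction, since $g(x^{-1})=g(x)$), i.e.\ $R\cell{n}{k}=R\cell{n}{-k}$. I would then establish, by a single simultaneous induction on $n$ using only the recurrence and the Frobenius identity $g(x)^2=g(x^2)$,
\[
f_{2n}(x)=g(x)\,f_n(x^2)\qquad\text{and}\qquad f_{2n+1}(x)=f_{n+1}(x^2)+f_n(x^2):
\]
for the first, expand $f_{2n}=g f_{2n-1}+f_{2n-2}$ and substitute both formulas at $n-1$ (the two copies of $g\,f_{n-1}(x^2)$ cancel); for the second, expand $f_{2n+1}=g f_{2n}+f_{2n-1}$, use the first formula at $n$ and the second at $n-1$, and watch the excess collapse via $f_{n+1}=g f_n+f_{n-1}$.

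Next I would reformulate. The triangle with apex \cell{1}{0} is \emph{literally} the top $2^{n-1}$ rows of the rhombus, so its $r$-th row carries $f_r$ for $1\le r\le 2^{n-1}$; the left target triangle, apex \cell{2^n+1}{-2^n}, carries in its $r$-th row precisely $f_r$ shifted $2^n$ columns to the left, and the right target triangle is the mirror image of the left one about column $0$. Hence, by $R\cell{n}{k}=R\cell{n}{-k}$, it suffices to treat the left triangle, and Conjecture~\ref{con1} becomes the statement that $R\cell{2^n+r}{i-2^n}=R\cell{r}{i}$ for $1\le r\le 2^{n-1}$ and $|i|\le r-1$. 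I propose to prove the following stronger claim by induction on $n$ (adopting $R\cell{0}{i}=0$):
\[
R\cell{2^n+r}{i-2^n}=R\cell{r}{i}\qquad\text{for all }n\ge1,\; 0\le r\le 2^{n-1},\; \text{and all integers }i\le r.
\]
Both sides vanish when $i<-(r-1)$ by the support bounds, so this recovers the conjecture; its single genuinely new instance is $i=r$, which says $R\cell{2^n+r}{r-2^n}=0$ --- the reproduced copy of the top leaves no residue one step past its edge --- and it is exactly this boundary instance that the induction will consume.

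For the inductive step ($n\ge2$), write $[x^j]f_m=R\cell{m}{j}$ and split on the parity of $r$, applying the doubling formulas to $f_{2^n+r}$ and to $f_r$ at once. If $r=2j+1$, then $f_{2^n+r}=f_{2^{n-1}+j+1}(x^2)+f_{2^{n-1}+j}(x^2)$ and $f_r=f_{j+1}(x^2)+f_j(x^2)$, both carrying only even exponents; at an even exponent $i=2b$ (so $b\le j$) the required identity is, term by term, the level-$(n-1)$ claim at rows $2^{n-1}+j+1$ and $2^{n-1}+j$, the needed bounds $b\le j+1$ and $b\le j$ lying exactly within the permitted range $i\le r'$ --- the second one invoking the boundary instance $i=r'$, which is the cleanness fact one level down. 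If $r=2j+2$, then $f_{2^n+r}=g(x)\,f_{2^{n-1}+j+1}(x^2)$ and $f_r=g(x)\,f_{j+1}(x^2)$; expanding $g=x^{-1}+1+x$ and separating parities, each coefficient of $f_{2^n+r}$ is a sum of one (at even exponents) or two (at odd exponents) coefficients of $f_{2^{n-1}+j+1}$ at positions that, by the level-$(n-1)$ claim used up to its boundary instance, match the corresponding coefficients of $f_{j+1}$, and these reassemble into the coefficients of $g(x)\,f_{j+1}(x^2)=f_r$. The base case $n=1$ is the direct check on $f_3=x^{-2}+x^2$, and $r=0$ is trivial from the support of $f_{2^n}$.

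The two parity computations and the exponent bookkeeping are routine, and I would not grind through them here. The one delicate point --- and, I expect, the main obstacle --- is \emph{finding the right statement to induct on}: the naive ``the copy reproduces $f_r$ on $|i|\le r-1$'' does not close, because certifying the copy at level $n$ already requires knowing that at level $n-1$ the entry immediately past the copy (the case $i=r$) equals $0$. Folding that ``cleanness one step past the edge'' into the induction hypothesis is what makes the parity recursion self-sustaining; I would also expect this same cleanness to be what lets the various sub-triangle copies be assembled without interference in the remaining conjectures.
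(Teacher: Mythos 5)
Your proof is correct, but it takes a genuinely different route from the paper. The paper obtains Conjecture~\ref{con1} as an almost immediate corollary of Theorem~\ref{main}: inside the order $n+1$ stealth configuration, the nose of the nose, the right-wing component of the left wing, and the left-wing component of the right wing are three congruent order $n-1$ stealth configurations growing down from \cell{1}{0}, \cell{2^n+1}{-2^n} and \cell{2^n+1}{2^n} respectively, and the three triangles are just their first $2^{n-1}$ rows; all the work is hidden in the exceptional-cells lemma and the five-piece decomposition. You instead work with row polynomials $f_n$ over $\mathbf{F}_2$ and the Frobenius doubling identities $f_{2n}(x)=g(x)f_n(x^2)$, $f_{2n+1}(x)=f_{n+1}(x^2)+f_n(x^2)$, which is essentially the parity-of-indices decomposition of Goldwasser {\it et al.}\ that this paper deliberately set out to replace. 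I checked your identities and the parity bookkeeping in both cases $r=2j+1$ and $r=2j+2$; they close as claimed, and your diagnosis of the key point is right: the naive statement on $|i|\le r-1$ does not propagate, and folding the boundary instance $i=r$ (i.e.\ $R\cell{2^n+r}{r-2^n}=0$, the cell one step past the slanted edge is clean) into the induction hypothesis is exactly what makes it self-sustaining --- this is the generating-function shadow of the geometric fact that the copied triangles sit inside stealth configurations bounded by zero cells along their slanted edges. What each approach buys: yours is shorter and fully self-contained for this one conjecture (and the doubling identities would also yield Conjectures~\ref{con2}--\ref{con4} by transfer-matrix arguments in the style of Moshe); the paper's single structural theorem delivers all four conjectures uniformly and explains the self-similar geometry, including the fourth triangle at \cell{2^n+2^{n-1}+1}{0} that the original conjecture missed.
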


For example, with $n=4$ in Figure~\ref{mod2}, the triangle with corners at \cell{1}{0}, \cell{8}{-7} and \cell{8}{7}
(the top quarter of the figure) is identical to the triangle with corners at \cell{17}{-16}, \cell{24}{-23} and
\cell{24}{-9} (the third quarter down, on the left), and to the triangle with corners at \cell{17}{16}, \cell{24}{9},
and \cell{24}{23} (the third quarter down, on the right).  The conjecture makes no mention of a fourth such
triangle, with corners at \cell{25}{0}, \cell{32}{ -7} and \cell{32}{7} (the middle of the bottom quarter of the figure).

\begin{con}[Klostermeyer {\it et al.}\ \cite{Klost}]\label{con2}
Let $I_n$ be the number of ones in row $m=2^n$ of the Pascal rhombus {\rm (mod 2)}.  Then
\begin{equation*}
I_n = \frac13\left( 2^{n+2} - (-1)^n\right).
\end{equation*}
\end{con}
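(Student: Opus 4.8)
\emph{Proof proposal.} The plan is to reduce Conjecture~\ref{con2} to counting the nonzero coefficients of a single polynomial over $\mathbf{F}_2$, and then to carry out that count with a small carry-propagation automaton. Working mod $2$ throughout, let $f_n(x) = \sum_k R\cell{n}{k}\,x^k \in \mathbf{F}_2[x,x^{-1}]$ be the generating polynomial of row $n$, so that $I_n$ is the number of nonzero coefficients of $f_{2^n}$. The recurrence of Section~\ref{intro} becomes $f_n = s\,f_{n-1} + f_{n-2}$ with $s = x^{-1}+1+x$, $f_0 = 0$, $f_1 = 1$. With $M = \left(\begin{smallmatrix} s & 1\\ 1 & 0\end{smallmatrix}\right)$, an easy induction gives $M^n = \left(\begin{smallmatrix} f_{n+1} & f_n\\ f_n & f_{n-1}\end{smallmatrix}\right)$; comparing the off-diagonal entry of $M^{2n} = (M^n)^2$ gives $f_{2n} = f_n(f_{n+1}+f_{n-1})$, and since $f_{n+1}+f_{n-1} = s\,f_n$ over $\mathbf{F}_2$ (the recurrence, with the $2f_{n-1}$ term gone), this becomes $f_{2n} = s\,f_n^{\,2}$. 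Iterating from $f_1 = 1$ yields $f_{2^n} = s^{\,2^n-1}$; as $s = x^{-1}(1+x+x^2)$, the shift $x^{-(2^n-1)}$ does not change the number of nonzero coefficients, so $I_n$ is the number of nonzero coefficients of $(1+x+x^2)^{2^n-1}$. Finally, writing $2^n-1 = \sum_{i=0}^{n-1} 2^i$ and squaring repeatedly, $(1+x+x^2)^{2^i} = 1 + x^{2^i} + x^{2^{i+1}}$, so
\begin{equation*}
(1+x+x^2)^{2^n-1} \;=\; \prod_{i=0}^{n-1}\bigl(1 + x^{2^i} + x^{2^{i+1}}\bigr) \qquad\text{over } \mathbf{F}_2 .
\end{equation*}

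Next I would count nonzero coefficients here. The coefficient of $x^E$ in this product is the parity of the number of ways to write $E = \sum_{i=0}^{n-1} e_i 2^i$ with each $e_i \in \{0,1,2\}$, and this parity can be computed by scanning the binary digits $\epsilon_0,\epsilon_1,\dots$ of $E$ from the bottom while carrying: one checks the carry never exceeds $1$, and encoding the carry as a standard basis vector of $\mathbf{F}_2^2$, emitting a $0$-bit acts by $T_0 = \left(\begin{smallmatrix}1&0\\1&1\end{smallmatrix}\right)$ and emitting a $1$-bit by $T_1 = \left(\begin{smallmatrix}1&1\\0&1\end{smallmatrix}\right)$. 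For $E$ with binary digits $\epsilon_0,\dots,\epsilon_n$ (so $0\le E\le 2^{n+1}-2$), the carry left over after the $n$ digits $e_i$ are used must equal $\epsilon_n$, so the parity sought is the $\epsilon_n$-th coordinate of $u(\epsilon_0,\dots,\epsilon_{n-1}) = T_{\epsilon_{n-1}}\cdots T_{\epsilon_0}\binom{1}{0}$. Summing over all such $E$ (the value $E=2^{n+1}-1$ contributes $0$),
\begin{equation*}
I_n \;=\; \sum_{\vec\epsilon \in \{0,1\}^n}\bigl(u_0(\vec\epsilon) + u_1(\vec\epsilon)\bigr).
\end{equation*}
Because $T_0$ and $T_1$ are invertible over $\mathbf{F}_2$, the vector $u(\vec\epsilon)$ is never $0$, so it equals one of $\binom{1}{0}$, $\binom{0}{1}$, $\binom{1}{1}$; letting $a_n,b_n,d_n$ count the $\vec\epsilon$ producing each, we get $a_n+b_n+d_n = 2^n$ and $I_n = a_n+b_n+2d_n = 2^n + d_n$. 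Reading off how $T_0,T_1$ permute those three vectors among themselves gives $a_n = a_{n-1}+d_{n-1}$, $b_n = b_{n-1}+d_{n-1}$, $d_n = a_{n-1}+b_{n-1}$, hence $d_n = 2^{n-1} - d_{n-1}$ with $d_0 = 0$.

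Solving this recurrence gives $d_n = \tfrac13\bigl(2^n - (-1)^n\bigr)$, so $I_n = 2^n + d_n = \tfrac13\bigl(2^{n+2} - (-1)^n\bigr)$, which is Conjecture~\ref{con2}; equivalently, $I_n + I_{n-1} = 2^{n+1}$. The inductions for $M^n$ and for $f_{2^n} = s^{2^n-1}$ and the solution of the final recurrence are routine; the delicate point, I expect, is the carry analysis --- checking that the carry stays in $\{0,1\}$, that over $\mathbf{F}_2$ the matrices $T_0,T_1$ really track the parity of the representation count, and especially the bookkeeping at the top digit $\epsilon_n$, which is exactly what promotes the naive count $2^n$ to $2^n + d_n$. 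As a sanity check I would also verify $n = 0,1,2$ directly against Figure~\ref{mod2}.
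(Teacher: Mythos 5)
Your proposal is correct, and it proves the conjecture by a genuinely different route. The paper reads $I_n$ off the stealth decomposition: row $2^n$ is the wingtip-to-wingtip row of the order-$n$ stealth configuration, so Theorem \ref{main} gives $B_n = 2A_{n-1}+B_{n-2}$, and since the central-column count satisfies $A_{n-1}=2^{n-1}$ this is $B_n = 2^n + B_{n-2}$, which is solved directly; the proof is essentially two lines once Theorem \ref{main} is available, and it slots into the unified treatment of all four conjectures. You instead bypass the geometry entirely: the identity $f_{2n}=s\,f_n^2$ over $\mathbf{F}_2$ (which I checked, along with the matrix induction and the carry analysis --- the carry does stay in $\{0,1\}$, and the top-digit bookkeeping forcing $c_n=\epsilon_n$ is right) shows that row $2^n$ of the rhombus mod $2$ \emph{is} the coefficient row of $(1+x+x^2)^{2^n-1}$ mod $2$, a stronger structural fact than the count alone (e.g.\ row $4$ is $1101011$, matching $(1+x+x^2)^3$). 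Your two-state automaton is in effect the transfer-matrix method of Moshe \cite{Moshe1,Moshe2} and Finch et al.\ \cite{Finch} cited in Section 4, where generic rows require $5\times 5$ matrices; your observation is that for rows $m=2^n$ the problem collapses to counting odd trinomial coefficients, where $2\times 2$ matrices suffice and the orbit structure of $T_0,T_1$ on the three nonzero vectors gives $d_n = 2^{n-1}-d_{n-1}$ immediately. What the paper's route buys is economy within its framework and a uniform mechanism that also handles Conjectures \ref{con1}, \ref{con3}, and \ref{con4}; what yours buys is independence from Theorem \ref{main}, the explicit link to the trinomial triangle, and a method that (via $f_{2n}=sf_n^2$ and $f_{2n+1}=(f_{n+1}+f_n)^2$) extends in principle to arbitrary rows, where the paper notes no closed form is known.
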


\begin{con}[Klostermeyer {\it et al.}\ \cite{Klost}]\label{con3}
For each $k\ge 1$, diagonal $D_k$ of the Pascal rhombus {\rm (mod 2)} consists of cells \cell{n}{ -n+k+1} for 
$n\ge (k+1)/2$. Each such diagonal
is periodic with 
period length $2^p$, where $p = \lceil \log_2(k)\rceil + 1$.  The period of the \cell{n}{ -n+1} diagonal $D_0$ is 1.
\end{con}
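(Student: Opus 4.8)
\medskip
\noindent\textbf{Proof proposal.}
The plan is to compress the whole diagonal into one rational generating function over $\mathrm{GF}(2)$, so that the periodicity becomes a statement about a denominator that is a power of $1+t$. First I would introduce the bivariate series $F(x,y)=\sum_{n\ge 0}\sum_{k}R\cell{n}{k}\,x^{k}y^{n}$, regarded as a power series in $y$ with Laurent-polynomial coefficients in $x$ over $\mathrm{GF}(2)$. Multiplying the defining recurrence by $x^{k}y^{n}$, summing over $n\ge 2$, and inserting the initial data ($R\cell{1}{0}=1$, all other entries of rows $0$ and $1$ being $0$) gives
\begin{equation*}
F(x,y)=\frac{y}{1+(x+1+x^{-1})\,y+y^{2}}\pmod{2}.
\end{equation*}

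Every cell of $D_{k}$ sits at \cell{n}{-n+k+1}, so along $D_{k}$ the quantity (column index)$\,+\,$(row index) equals the constant $k+1$. Under the substitution $x\mapsto s$, $y\mapsto st$, which sends $x^{c}y^{n}$ to $s^{c+n}t^{n}$, the coefficient of $s^{k+1}$ in $F(s,st)$ is therefore exactly the diagonal generating function $G_{k}(t):=\sum_{n}R\cell{n}{-n+k+1}\,t^{n}$; one checks that $F(s,st)$ really lies in $\mathrm{GF}(2)[s][[t]]$, so this coefficient extraction is legitimate. Writing $F(s,st)=\dfrac{st}{(1+t)+st+s^{2}t(1+t)}=\dfrac{A}{1+A+B}$ with $A=\dfrac{st}{1+t}$ and $B=s^{2}t$ (both in the maximal ideal $(s,t)$), expanding $\dfrac{1}{1+A+B}=\sum_{m\ge 0}(A+B)^{m}$ with the mod-$2$ binomial theorem, and noting that the $(m,i)$-summand carries the monomial $s^{2m-i+1}$, I extract $[s^{k+1}]$ by setting $i=2m-k$:
\begin{equation*}
G_{k}(t)=\sum_{m=\lceil k/2\rceil}^{k}\binom{m}{2m-k}\frac{t^{m+1}}{(1+t)^{2m-k+1}}=\frac{N_{k}(t)}{(1+t)^{k+1}},
\end{equation*}
where $N_{k}(t)=\sum_{m=\lceil k/2\rceil}^{k}\binom{m}{2m-k}\,t^{m+1}(1+t)^{2(k-m)}$. (For $k=0$ this degenerates to $G_{0}(t)=t/(1+t)=t+t^{2}+\cdots$, the all-ones diagonal of period $1$, as the conjecture asserts.)

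Now the period falls out. In $\mathrm{GF}(2)$ the factor $(1+1)^{2(k-m)}$ in the $m$-th summand of $N_{k}(1)$ vanishes unless $m=k$, so $N_{k}(1)=\binom{k}{k}=1\ne 0$; hence $1+t$ is not a factor of $N_{k}$, and $N_{k}/(1+t)^{k+1}$ is already in lowest terms, its denominator having no irreducible factor other than $1+t$. Over $\mathrm{GF}(2)$ one has that $(1+t)^{k+1}$ divides $1+t^{2^{p}}=(1+t)^{2^{p}}$ exactly when $2^{p}\ge k+1$; taking $p$ minimal with that property—for $k\ge 1$ this is $p=\lceil\log_{2}(k+1)\rceil=1+\lfloor\log_{2}k\rfloor$—and rewriting $G_{k}=S_{k}(t)+T_{k}(t)/(1+t^{2^{p}})$ with $\deg T_{k}<2^{p}$, exhibits $\bigl(R\cell{n}{-n+k+1}\bigr)_{n}$ as periodic with period $2^{p}$. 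A short degree estimate shows $\deg S_{k}<\lceil(k+1)/2\rceil$, so this periodicity already holds on the whole range $n\ge\lceil(k+1)/2\rceil$ on which $D_{k}$ is defined, while the coprimality of $N_{k}$ with $1+t$ rules out any smaller power of $2$. Since $2^{p}$ divides $2^{\lceil\log_{2}k\rceil+1}$, the value named in Conjecture~\ref{con3} is in any case a period of every $D_{k}$.

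The two points that need care are the diagonal extraction—confirming that $F(s,st)$ is the power series one expects and that taking $[s^{k+1}]$ commutes with the rearrangement of $F$ into $A/(1+A+B)$—and, in the last step, the degree bookkeeping for $S_{k}(t)$ that upgrades ``eventually periodic'' to genuine periodicity over the entire domain of $D_{k}$. Everything else is routine manipulation of power series over $\mathrm{GF}(2)$.
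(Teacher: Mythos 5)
Your proof is correct, but it takes a genuinely different route from the paper's. The paper never introduces generating functions: it first establishes the five-piece stealth decomposition (Theorem \ref{main}) and a mirror-symmetry lemma (Lemma \ref{mirror}, reflecting the upper rhombus of $S_n$ onto its left rhombus), and then argues by induction that the intersection of $D_k$ with each successive stealth configuration $S_{m+n}$ is a palindrome obtained by doubling the previous one, hence periodic with period $2^{m+1}$ where $m=\lfloor\log_2 k\rfloor$. You reach the same strengthened exponent, since $p=\lceil\log_2(k+1)\rceil=\lfloor\log_2 k\rfloor+1$ for $k\ge1$, by showing that the diagonal generating function is $N_k(t)/(1+t)^{k+1}$ in lowest terms and that $(1+t)^{k+1}$ divides $1+t^{2^p}=(1+t)^{2^p}$ precisely when $2^p\ge k+1$; since the conjectured $2^{\lceil\log_2 k\rceil+1}$ is a multiple of this, the conjecture follows in both treatments. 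Your route is self-contained (no stealth machinery), yields an explicit closed form for each diagonal, and---because $N_k(1)=1$ pins the denominator at exactly $(1+t)^{k+1}$---actually proves that $2^{\lfloor\log_2 k\rfloor+1}$ is the \emph{minimal} eventual period, which the paper does not claim. The paper's route, in exchange, delivers the palindrome structure of the diagonals and keeps all four conjectures inside one unified framework. The two points you flag do check out: $F(s,st)$ lies in $\mathrm{GF}(2)[s][[t]]$ because row $n$ contributes only $s$-exponents between $1$ and $2n-1$, and since every summand of $N_k$ has degree $2k-m+1\le 2k-\lceil k/2\rceil+1$, one gets $\deg S_k\le\deg N_k-(k+1)\le\lfloor k/2\rfloor<\lceil(k+1)/2\rceil$, so the periodicity holds on the entire domain $n\ge(k+1)/2$ of $D_k$ and not merely eventually.
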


Note that the periods claimed here are not necessarily  minimal.  The authors illustrate the conjecture by observing that
diagonal $D_6$ begins
11011011, and presumably repeats these eight values.  Conjecture \ref{con3}, however, only asserts a period of 16.

\begin{con}[Klostermeyer {\it et al.}\ \cite{Klost}]\label{con4}
Let $G_n$ and $H_n$ be the number of odd and even entries, respectively, in the first $n$ rows of the Pascal rhombus.
Then $\lim_{n\rightarrow \infty}G_n/H_n = 0$.
\end{con}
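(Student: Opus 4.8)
The plan is to prove the stronger statement that the odd entries have density zero, that is $G_n=o(n^{2})$; the conjecture follows at once, because the first $n$ rows consist of exactly the cells \cell{m}{k} with $1\le m\le n$ and $-m<k<m$, so $G_n+H_n=\sum_{m=1}^{n}(2m-1)=n^{2}$, and hence $G_n/H_n=G_n/(n^{2}-G_n)\to 0$ as soon as $G_n/n^{2}\to 0$. Since $G_n$ is nondecreasing, for $2^{n-1}\le m<2^{n}$ we have $G_m/m^{2}\le G_{2^{n}}/4^{n-1}=4G_{2^{n}}/4^{n}$, so it is enough to prove $G_{2^{n}}=o(4^{n})$. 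Write $g_n=G_{2^{n}}/4^{n}\in[0,1]$; the goal is $g_n\to 0$.

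Next I would feed the self-similar description of the Pascal rhombus (mod 2) built in the earlier sections --- the one organized around the stealth configuration, which is exactly what makes Conjecture~\ref{con1} true --- into an area count. That description exhibits the first $2^{n+1}$ rows as a union of a fixed number $C$ of pairwise disjoint translates of the first $2^{n-1}$ rows --- the triangle of Conjecture~\ref{con1}, its two images named there, the ``fourth'' copy noted just after that conjecture, and whatever further copies the recursion produces --- each an \emph{exact} copy of that triangle mod 2, together with a remainder region $E_n$. Two facts about this decomposition carry the argument. First, $E_n$ contains only $O(2^{n})$ odd entries: it is a bounded union of the two lateral edges (the all-ones diagonal $D_0$ of Conjecture~\ref{con3} and its mirror, of length $\Theta(2^{n})$ each), a few boundary rows, and a bounded number of thin ``stealth'' strips, which are almost entirely even and in any case comprise only $O(2^{n})$ cells in total. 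Second, $C<16$: the $C$ copies are disjoint, each fills $4^{n-1}=4^{n+1}/16$ cells, and together they miss a set of cells of density at least some fixed $\delta>0$, namely (most of) the stealth region, so $C\le 16(1-\delta)$. As each copy contains exactly $G_{2^{n-1}}$ odd entries, we obtain $G_{2^{n+1}}\le C\,G_{2^{n-1}}+O(2^{n})$, and dividing by $4^{n+1}$,
\[
g_{n+1}\le \rho\,g_{n-1}+O(4^{-n}),\qquad \rho:=C/16<1 .
\]

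From here no growth-rate estimate is needed. Let $L=\limsup_{n}g_n\in[0,1]$ and choose $n_k\to\infty$ with $g_{n_k}\to L$. For $k$ large the recursion gives $g_{n_k}\le\rho\,g_{n_k-2}+O(4^{-(n_k-1)})$; letting $k\to\infty$ and using $\limsup_k g_{n_k-2}\le L$ and $O(4^{-(n_k-1)})\to 0$ yields $L\le\rho L$, and since $\rho<1$ this forces $L=0$, i.e.\ $g_n\to 0$. Thus $G_{2^{n}}=o(4^{n})$, hence $G_n=o(n^{2})$ by the reduction in the first paragraph, and therefore $\lim_{n\to\infty}G_n/H_n=0$.

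The only genuine difficulty lies in the structural input of the second paragraph: showing that, once the self-similar copies of the first $2^{n-1}$ rows have been removed, the leftover ``stealth'' part of the first $2^{n+1}$ rows is simultaneously of positive density (which yields $C<16$) and almost entirely even (only $O(2^{n})$ odd entries). This is precisely the dividend of the configuration analysis carried out earlier in the paper --- the same analysis that forces Conjectures~\ref{con1}--\ref{con3} --- and once it is in hand the recursion, and with it the limit, follows mechanically. The argument is robust: a remainder bound of the form $2^{n}\,\mathrm{poly}(n)$, or self-similarity at any scale $2^{j}$ with the corresponding constant $C<4^{j}$, goes through in exactly the same way.
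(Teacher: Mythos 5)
Your reduction to dyadic scales and the closing $\limsup$ argument are fine, and the overall shape of the plan --- a contraction inequality $g_{n+1}\le\rho\,g_{n-1}+o(1)$ with $\rho<1$ --- is a softer version of what the paper actually establishes. But the structural input you defer to your second paragraph is not merely ``the only genuine difficulty''; as stated it is internally inconsistent, and in its exact form it is provably false. To get $C<16$ you need your $C$ disjoint copies of the first $2^{n-1}$ rows to miss a set of \emph{positive density}, i.e.\ at least $4^{n-1}$ of the $4^{n+1}$ cells; such a remainder cannot ``comprise only $O(2^n)$ cells in total,'' so that escape hatch is unavailable, and you are forced to assert that a specific positive-density region is almost entirely even --- which is essentially the statement being proved. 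Worse, a genuine disjoint decomposition into $C$ exact translated copies plus a remainder containing $O(2^n)$ ones would give $G_{2^{n+1}}=C\,G_{2^{n-1}}+O(2^n)$ with \emph{equality}, hence $G_{2^{n+1}}/G_{2^{n-1}}\to C$, an integer. But the paper's formula \eqref{E} gives $G_{2^m}\sim \frac{17+7\sqrt{17}}{68}\,\lambda^m$ with $\lambda=(3+\sqrt{17})/2$, so that ratio tends to $\lambda^2=(13+3\sqrt{17})/2\approx 12.68$, which is irrational. No integer $C$ can work, so the remainder necessarily carries a constant fraction of $G_{2^{n-1}}$ ones, far more than $O(2^n)$.

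The underlying reason is that the triangle formed by the first $2^m$ rows is not the self-similar unit of this structure; the stealth configuration is. Conjecture~\ref{con1} hands you only four triangle copies at scale $2^{n-1}$ inside the first $2^{n+1}$ rows, nowhere near the $13$ or more you would need, and the rest of the region is built from stealth configurations whose lower portions are not triangles at any single scale --- covering them by triangles requires copies at unboundedly many scales, not a fixed $C$. The paper's route instead counts ones in stealth configurations: Theorem~\ref{main} gives $C_n=3C_{n-1}+2C_{n-2}$ (three large plus two small copies inside a region whose area quadruples), so the ones grow like $\lambda^n$ with $\lambda\approx 3.56<4$; this inequality $\lambda<4$ is the correct incarnation of your $\rho<1$. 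The triangle count is then recovered via $E_n=2C_{n-1}+E_{n-2}+A_{n-1}$, which mixes scales and an auxiliary spine count in a way that a single-scale, fixed-$C$ cover cannot reproduce. To repair your argument you would have to route it through the stealth decomposition (or some equivalent multi-scale covering), at which point you are redoing the paper's computation.
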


In a subsequent paper by many of the same authors, Goldwasser {\it et al.} \cite{Gold} proved the correctness of
Conjectures \ref{con2} and \ref{con4}, using an elaborate decomposition of the Pascal rhombus based on odd and even
values of the indices $n$ and $k$.  In the next two sections we show how one key observation leads to unified proofs of
all four conjectures.

\section{The stealth shape}
\label{shape}

A standard way to analyze structures such as the Pascal triangle (mod 2) and their fractal limits is to exploit the
fact that such structures can be decomposed into a finite union of lower order structures.
Unfortunately, there is no way to decompose the first $2^n$ rows of the Pascal rhombus (mod 2) into a finite number of 
smaller, similar copies of these triangles. Conjecture \ref{con1} suggests how far we are from this desirable situation.   However, truncating the Pascal rhombus (mod 2) along a zig-zag line produces a
structure that {\it can} be decomposed into five lower order copies of itself.  We call this resulting structure a
stealth configuration because of its vague resemblance to the B2 stealth bomber.  See Figure \ref{stealth}.

\begin{figure}[h!]
\centerline{\includegraphics[width=2.95in,height=2.21in,keepaspectratio]{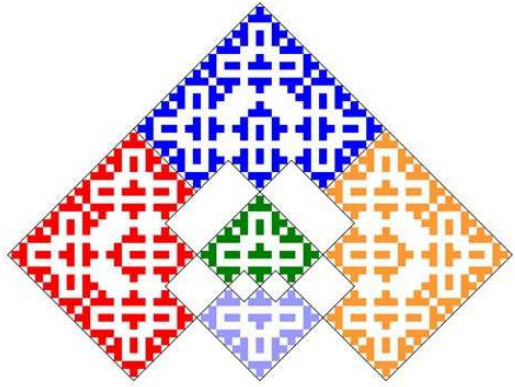}}
\caption{The Stealth Configuration and Stealth Decomposition.}
\label{stealth}
\end{figure}

\begin{defn}
\label{stealthdef}
The {\it order $n$ stealth configuration} $S_n$, for $n\ge 2$, is that part of the Pascal rhombus (mod 2) strictly within 
the octagon with edges linking
cells \cell{0}{0}, 
\cell{2^n}{-2^n}, \cell{2^n + 2^{n-1}}{-2^{n-1}}, \cell{2^n + 2^{n-2}}{-2^{n-2}}
\cell{2^n + 2^{n-1}}{0}, \cell{2^n + 2^{n-2}}{2^{n-2}}, \cell{2^n + 2^{n-1}}{2^{n-1}},
\cell{2^n}{2^n}, and back to \cell{0}{0}.  
The order 1 stealth configuration $S_1$ consists of cells \cell{1}{0}, \cell{2}{-1}, \cell{2}{0} and \cell{2}{1}
in the Pascal rhombus (mod 2), and the order 0 stealth configuration $S_0$ consist of just cell \cell{1}{0}.
\end{defn}

We will refer to translations and rotations of stealth configurations as stealth configurations as well, and 
will sometimes refer to
those growing down from cell \cell{1}{0} as being in standard position.

Figure \ref{stealth} shows the stealth configuration $S_5$, 
strictly inside the octagon passing through 
\cell{0}{0}, \cell{32}{-32}, \cell{48}{-16}, \cell{40}{-8}, \cell{48}{0}, \cell{40}{8}, 
\cell{48}{16}, \cell{32}{32}, and back to \cell{0}{0}.
It consists of an order 4 stealth configuration growing down from \cell{1}{0} (the nose), an order 4
stealth configuration growing to the right from \cell{32}{-31} (the left wing), an order 4
stealth configuration growing to the left from  \cell{32}{31} (the right wing), an order 3
stealth configuration growing down from \cell{25}{0} (the main body), and an order 3 stealth
configuration growing up from  \cell{47}{0} (the tail).  

\begin{thm}
\label{main}
For all $n\ge 2$, the order $n$ stealth configuration $S_n$ in standard position can be decomposed into the disjoint 
union of 5 smaller
stealth configurations:  an order $n-1$ stealth configuration growing down from  \cell{1}{0} (the nose),
an order $n-1$ stealth configuration growing to the right from \cell{2^n}{-(2^n-1)} (the left wing),
an order $n-1$ stealth configuration growing to the left from  \cell{2^n}{2^n-1} (the right wing),
an order $n-2$ stealth configuration growing down from  \cell{2^{n}-2^{n-2}+1}{0} (the main body), and 
an order $n-2$ stealth configuration growing up from \cell{2^n+2^{n-1}-1}{0} (the tail).
\end{thm}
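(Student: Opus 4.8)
The plan is to argue by induction on $n$, using only the defining recurrence read modulo $2$. Two features of that recurrence do all the work. First, over $\mathbf{F}_2$ it is linear, so the Pascal rhombus is the unique $\mathbf{F}_2$-valued array determined by its seed (a single $1$ at \cell{1}{0}, zeros above). Second, and crucially, the five-term relation is \emph{reversible and isotropic}: solving it for the top cell gives $R\cell{n-2}{k}=R\cell{n}{k}+R\cell{n-1}{k-1}+R\cell{n-1}{k}+R\cell{n-1}{k+1}$, and solving it for a side cell gives $R\cell{n-1}{k+1}=R\cell{n}{k}+R\cell{n-1}{k-1}+R\cell{n-1}{k}+R\cell{n-2}{k}$, so in each of the four axis directions one recovers \emph{the same} $5$-cell rule. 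Hence any axis-aligned translate or $90^{\circ}$ rotation of a configuration obeying the rule still obeys it; a stealth configuration can be grown down, up, left, or right, and once its seed is matched to the ambient pattern it reproduces that pattern on its whole footprint. (Equivalently, and more compactly, the key observation can be packaged as the identity $F(x,y)=(x+x^{-1}+1+y+y^{-1})\,F(x^{2},y^{2})$ over $\mathbf{F}_2$, where $F(x,y)=\sum_{n\ge1}\bigl(\sum_{k}R\cell{n}{k}\,x^{k}\bigr)y^{n}$; the five monomials on the right are precisely the five offsets of a $5$-cell rhombus.)

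Granting this, I would split the theorem into two checks. \textbf{(A) The five pieces tile $S_n$.} Writing $T_m$ for the octagon of $S_m$ (for small $m$, the explicit cell set), verify that the named sub-octagons---$T_{n-1}$ at the nose, two $90^{\circ}$ rotates of $T_{n-1}$ at the wings, $T_{n-2}$ at the body, and an inverted $T_{n-2}$ at the tail---have pairwise disjoint interiors whose union is the interior of $T_n$. This is finite coordinate bookkeeping: list the eight vertices of each sub-octagon, check that the outer ones lie on $\partial T_n$ (the shared vertices being \cell{2^n}{\pm 2^n}, \cell{2^n+2^{n-1}}{\pm 2^{n-1}}, \cell{2^n+2^{n-1}}{0}, and the notch vertices \cell{2^n+2^{n-2}}{\pm 2^{n-2}}), that the remaining edges run along a small number of internal cut lines, and that those cuts partition the interior. \textbf{(B) Each piece is launched correctly.} By the first paragraph it suffices to check that along the two initiating lines bordering each piece's apex the ambient Pascal rhombus content is a single $1$ at the designated apex cell and $0$ at every other cell of the piece.

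For the nose, (B) is immediate, since $T_{n-1}$ in standard position lies inside the nose region and $S_{n-1}$ is by definition the Pascal rhombus content there. For the wings, body, and tail the seed condition is equivalent to the statement that $R$ \emph{vanishes on a thin strip just outside each sub-octagon}---except for the constantly-$1$ diagonal $D_0$, which furnishes the $1$ at each wing apex. I would obtain these vanishing facts by strengthening the induction hypothesis to assert, together with the decomposition, the vanishing of $R$ on a boundary strip around $T_n$, and then verifying that the decomposition just produced reproduces this strip-vanishing one scale up, closing the induction. The base case $n=2$, and the conventions defining $S_1$ and $S_0$, are checked directly against Figure~\ref{mod2}.

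The main obstacle is exactly this no-leakage step: showing that the five pieces do not bleed across the internal cuts (the cell immediately across each cut must be white), since that is the one place where the geometry of the zig-zag boundary---and in particular the exact notch positions \cell{2^n+2^{n-2}}{\pm 2^{n-2}}---must be reconciled with the arithmetic. Running the argument instead through the generating-function identity makes the no-leakage automatic but moves the difficulty to identifying the support of the order-$n$ partial product $\prod_{j=0}^{n-1}\bigl(x^{2^{j}}+x^{-2^{j}}+1+y^{2^{j}}+y^{-2^{j}}\bigr)$, suitably shifted, with the black-cell pattern of $S_n$---once more a matching of a product-of-stencils support against the octagon-with-notches.
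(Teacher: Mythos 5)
Your reduction to (A) a tiling check plus (B) a ``clean seed'' check for each piece rests on a claim that is false: the Pascal rhombus (mod 2) does \emph{not} vanish on a thin strip just outside the lower edges of the sub-octagons (nor outside $T_n$ itself). Already for $S_2$, whose lower-left edge runs through \cell{4}{-4}, \cell{5}{-3}, \cell{6}{-2}, the cells \cell{6}{-5}, \cell{6}{-4}, \cell{6}{-3} just outside that edge all equal $1$; more generally the pattern continues below and beside every $S_n$ with plenty of black cells (all of $D_0$, the cell \cell{2^n+2^{n-1}+1}{0} seeding the next body, and so on). The deeper problem is that the wings, body and tail cannot be characterized as ``the unique solution grown from a single $1$'' inside the ambient pattern: the sideways (resp.\ downward, upward) dependency cone of each piece widens by one cell per step and sweeps far outside that piece's octagon, into the nose, the other wing, and the region below $S_n$, all of which contain $1$s. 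For instance, computing the left wing by your rightward recurrence already draws on \cell{2^n+1}{-2^n}, which lies on $D_0$, equals $1$, and sits outside the wing's octagon; the body's initiating rows $2^n-2^{n-2}$ and $2^n-2^{n-2}+1$ cut across the wings. So strengthening the induction to ``$R$ vanishes on a boundary strip'' cannot close: the hypothesis you would be carrying is simply not true, and this is precisely the no-leakage step you flag as the main obstacle.

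The paper resolves that step by a different device. It \emph{defines} the cut-and-paste object $S_n'$ recursively, calls a cell \emph{exceptional} when the $5$-cell rhombus centered at it contains an odd number of ones, and proves by induction (Lemma 1) that the $30$ exceptional cells contributed by the five components cancel in pairs at the internal junctions, leaving only the six acute corners of the bounding octagon. Since the rule is then violated nowhere that matters, uniqueness of the solution of the linear mod-$2$ recurrence with seed at \cell{1}{0} forces $S_n'$ to coincide with the genuine Pascal-rhombus content, i.e.\ with $S_n$. This parity bookkeeping of rule violations is the idea your plan is missing: it replaces the untenable vanishing-strip hypothesis with a cancellation statement that is actually true across the cuts. (Your identity $F(x,y)=(x+x^{-1}+1+y+y^{-1})F(x^2,y^2)$ is correct over $\mathbf{F}_2$ and could in principle be pushed through, but the support of the shifted partial product $y^{2^n}\prod_{j<n}\bigl(x^{2^j}+x^{-2^j}+1+y^{2^j}+y^{-2^j}\bigr)$ is Sloane's diamond $H_{n+1}$ rather than $S_n$, so that route still needs a separate argument to carve out the octagon.)
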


We need two temporary definitions before we prove Theorem \ref{main}.  First,
the {\it order $n$ pseudo-stealth configuration} $S_n^{\prime}$ is defined as follows:  For $n=0$ and $n=1$, the order $n$
pseudo-stealth configuration $S_n^{\prime}$ is the same as the order $n$ stealth configuration $S_n$ from Definition \ref{stealthdef}.
For $n\ge 2$, $S_n^{\prime}$ is the configuration formed from copies of lower order pseudo-stealth configurations 
$S_{n-1}^{\prime}$ and $S_{n-2}^{\prime}$ in accordance with
Theorem \ref{main}. Our task is to prove that the cut-and-paste pseudo-stealth configuration $S_n^{\prime}$ is identical to
the  rhombus rule stealth configuration $S_n$ for all $n\ge 0$.

Second, we call a grid cell {\it exceptional} with respect 
to a stealth or pseudo-stealth configuration if it is the center cell of a 5-cell rhombus containing an odd number of ones.
We will show, for example, that
that the exceptional cells with respect to the order 5 configuration 
in Figure \ref{stealth} are \cell{0}{0}. \cell{32}{-32}, \cell{48}{-16}, \cell{48}{0}, \cell{48}{16},
and \cell{32}{32} (the acute corners of the bounding octagon).

\begin{lem}
\label{excep}
The exceptional cells  with respect to the order $n$ pseudo-stealth configuration $S_n^{\prime}$ in standard position, for  $n\ge 1$, are at
\cell{0}{0}, \cell{2^n}{-2^n}, \cell{2^n+2^{n-1}}{-2^{n-1}}, \cell{2^n + 2^{n-1}}{0}, \cell{2^n + 2^{n-1}}{2^{n-1}},
and \cell{2^n}{2^n}.
\end{lem}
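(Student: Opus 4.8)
The plan is to recast ``exceptional cell'' as an additive, symmetry-equivariant operation on configurations, and then induct on $n$ along the five-piece decomposition that \emph{defines} $S_n'$.

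First the key observation. The defining five-cell rhombus centred at a cell $c=\cell{m}{j}$ is exactly the ``plus'' $\Pi(c)=\{\cell{m}{j},\cell{m-1}{j},\cell{m+1}{j},\cell{m}{j-1},\cell{m}{j+1}\}$, so $c$ is exceptional with respect to a configuration $C$ precisely when $|C\cap\Pi(c)|$ is odd. Write $E(C)$ for the set of exceptional cells of $C$. Since $|C\cap\Pi(c)|$ is additive in $C$, the operator $E$ is additive modulo $2$: for a \emph{disjoint} union $C=C_1\sqcup\cdots\sqcup C_r$ we have $E(C)=E(C_1)\triangle\cdots\triangle E(C_r)$, with $\triangle$ denoting symmetric difference. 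And since the shape $\Pi$ is carried to itself by every symmetry of the square lattice, $E$ commutes with all translations, rotations, and reflections; so if a sub-configuration is a translated, rotated, or reflected copy of some $S_m'$, then its exceptional set is the same copy of $E(S_m')$. These two facts reduce the lemma to a finite amount of coordinate bookkeeping.

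Now the induction on $n$. The base case $n=1$ is checked by hand: intersecting $\Pi(c)$ with the four cells of $S_1'$, one gets $E(S_1')=\{\cell{0}{0},\cell{2}{-2},\cell{3}{-1},\cell{3}{0},\cell{3}{1},\cell{2}{2}\}$, exactly the six cells predicted. One also records the auxiliary fact $E(S_0')=\{\cell{0}{0},\cell{1}{-1},\cell{1}{0},\cell{1}{1},\cell{2}{0}\}$ (the plus around the lone cell \cell{1}{0}), which is needed when $n=2$. For $n\ge2$, $S_n'$ is by definition the union of the five pieces positioned as in Theorem~\ref{main}: the nose is $S_{n-1}'$ itself (anchored at \cell{1}{0}); the left and right wings are copies of $S_{n-1}'$ grown sideways from \cell{2^n}{-(2^n-1)} and \cell{2^n}{2^n-1} (that is, obtained from $S_{n-1}'$ by a rotation or reflection of the grid); and the body and tail are copies of $S_{n-2}'$ --- of $S_0'$ when $n=2$ --- anchored at \cell{2^n-2^{n-2}+1}{0} and \cell{2^n+2^{n-1}-1}{0}. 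A preliminary step, immediate from the listed octagon vertices, is that these five sub-octagons have pairwise disjoint interiors, so $S_n'$ really is a disjoint union. The additivity of $E$ then gives $E(S_n')=E(\text{nose})\triangle E(\text{left wing})\triangle E(\text{right wing})\triangle E(\text{body})\triangle E(\text{tail})$, and equivariance of $E$ together with the inductive hypothesis identifies each term as the corresponding transported copy of the six-corner set of $S_{n-1}'$ or $S_{n-2}'$ (or, for the $n=2$ body and tail, of the five-cell plus around \cell{1}{0}).

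It then remains to evaluate this symmetric difference, which is the only step I expect to require care. Transporting the six acute corners of $S_{n-1}'$ and of $S_{n-2}'$ through the five anchoring maps, one verifies that every resulting cell is either an acute corner of the order-$n$ octagon or a point where two adjacent sub-octagons meet --- a shared vertex, with the two reflex vertices of the order-$n$ octagon being of this second kind. Each acute corner of the order-$n$ octagon arises from exactly one sub-piece, and each such shared vertex from exactly two, so modulo $2$ everything cancels except the six cells \cell{0}{0}, \cell{2^n}{-2^n}, \cell{2^n}{2^n}, \cell{2^n+2^{n-1}}{-2^{n-1}}, \cell{2^n+2^{n-1}}{2^{n-1}}, \cell{2^n+2^{n-1}}{0}, which is exactly the assertion of the lemma; the $n=2$ instance cancels in the same pattern, with the plus-shaped contributions of the body and tail taking the place of the absent corner-sets. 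The main obstacle, then, is not conceptual but clerical: correctly listing the roughly thirty transported corner-cells and confirming that they pair off as claimed, together with the routine disjointness verification for the five sub-octagons and the handling of the small orders $n=1,2$.
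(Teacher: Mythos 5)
Your proposal is correct and follows essentially the same route as the paper: an induction along the five-piece decomposition defining $S_n^{\prime}$, using the fact that exceptionality is additive mod 2 over a disjoint union (so the exceptional set of $S_n^{\prime}$ is the symmetric difference of the transported six-corner sets of the pieces), with the surviving cells being exactly the six acute corners and the other 24 cancelling in pairs at shared vertices. The only cosmetic difference is that you fold $n=2$ into the induction via the auxiliary computation of $E(S_0^{\prime})$, whereas the paper treats $n=1,2$ as hand-checked base cases and starts the induction at $n\ge 3$.
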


\begin{proof}[Proof of Lemma \ref{excep}]
The proof is by induction.  The cases $n=1$ and $n=2$ can be confirmed by hand.  We assume that the statement is
true for all pseudo-stealth configurations of order less than $n$, for some arbitrary $n\ge 3$, and consider an
order $n$ pseudo-stealth configuration $S_n^{\prime}$ of order $n$, in standard position.
Configuration $S_n^{\prime}$ is the union of
the following: An order $n-1$ pseudo-stealth configuration (the nose) with exceptional cells at 
\cell{0}{0}, \cell{2^{n-1}}{-2^{n-1}}, 
\cell{2^{n} - 2^{n-2}}{-2^{n-2}}, \cell{2^{n} - 2^{n-2}}{0}, \cell{2^{n}-2^{n-2}}{2^{n-2}}, and \cell{2^{n-1}}{2^{n-1}};
an order $n-1$ pseudo-stealth configuration (the left wing) with exceptional cells at 
\cell{2^n}{-2^n}, \cell{2^n +2^{n-1}}{-2^{n-1}},
\cell{2^n + 2^{n-2}}{-2^{n-2}}, \cell{2^n}{-2^{n-2}}, \cell{2^{n} - 2^{n-2}}{-2^{n-2}}, and \cell{2^{n-1}}{-2^{n-1}}; 
an order $n-1$ pseudo-stealth configuration (the right wing) with exceptional cells at 
\cell{2^n}{2^n}, \cell{2^{n-1}}{2^{n-1}},
\cell{2^{n}-2^{n-2}}{2^{n-2}}, \cell{2^n}{2^{n-2}}, \cell{2^n+2^{n-2}}{2^{n-2}}, and \cell{2^n+2^{n-1}}{2^{n-1}};
an order $n-2$ pseudo-stealth configuration (the main body) with exceptional cells at 
\cell{2^n - 2^{n-2}}{0}, \cell{2^n}{-2^{n-2}},
\cell{2^n+2^{n-3}}{-2^{n-3}}, \cell{2^n+2^{n-3}}{0}, \cell{2^n+2^{n-3}}{2^{n-3}}, and \cell{2^n}{2^{n-2}}; and on order $n-2$
pseudo-stealth configuration (the tail) with exceptional cells at 
\cell{2^n + 2^{n-1}}{0}, \cell{2^n + 2^{n-2}}{2^{n-2}},
\cell{2^n + 2^{n-3}}{2^{n-3}}, \cell{2^n + 2^{n-3}}{0}, \cell{2^n + 2^{n-3}}{-2^{n-3}}, and \cell{2^n + 2^{n-2}}{-2^{n-2}}.

It is easy to see that a cell in the union of several pseudo-stealth configurations is exceptional if and only if it
is exceptional in an odd number of the component pseudo-stealth configurations.  Of the 30 exceptional cells listed above,
24 pair off and cancel out, leaving configuration $S_n^{\prime}$ with just the six exceptional cells  specified in the statement 
of the lemma.
\end{proof}

\begin{proof}[Proof of Theorem \ref{main}]
We will show that for all $n\ge 0$, the order $n$ stealth configuration $S_n$ is identical to the order $n$ 
pseudo-stealth configuration $S_n^{\prime}$.
For $n=0$ and $n=1$, this is true by definition.  
For arbitrary $n\ge 2$, let $S_{n+1}^{\prime}$ be the order $n+1$ pseudo-stealth configuration in standard position.
Consider the nose section of $S_{n+1}^{\prime}$. On the one hand, from Lemma \ref{excep} we know all the exceptional cells
of $S_{n+1}^{\prime}$, and all the cells in the nose of $S_{n+1}^{\prime}$ are closer to the exceptional cell \cell{0}{0} than to
any other.  Thus the nose of $S_{n+1}^{\prime}$ is contained in the Pascal Rhombus (mod 2) that grows down from 
%a one in 
cell \cell{1}{0} without interruption, using the rhombus rule.  That is, it is the order $n$ stealth configuration $S_n$.
On the other hand, the nose of $S_{n+1}^{\prime}$ is, by construction,  the order $n$ pseudo-stealth configuration 
$S_n^{\prime}$ as well.
\end{proof}

\section{Proving the conjectures}
\label{conj}
Armed with Theorem \ref{main}, we can readily prove the conjectures of Klostermeyer {\it et al.}\cite{Klost}
stated in Section \ref{intro}.

\begin{proof}[Proof of Conjecture \ref{con1}]
Let $S_{n+1}$ be the order $n+1$ stealth configuration in standard position for some
$n\ge 1$.  It contains
an order $n-1$ stealth configuration growing down from \cell{1}{0} (the nose of the nose of $S_{n+1}$),
an order $n-1$ stealth configuration growing down from \cell{2^{n}+1}{-2^{n}} (the right wing 
component of the left wing of $S_{n+1}$),
and an order $n-1$ stealth configuration growing down from \cell{2^{n}+1}{2^{n}} (the left wing 
component of the right wing of $S_{n+1}$).
The three triangles of Conjecture \ref{con1} consist of the first $2^{n-1}$ rows of these three order $n-1$ stealth
configurations.
\end{proof}

The fourth identical triangle noted in Section \ref{intro} consists of the first $2^{n-1}$ rows of the order $n-2$
stealth configuration growing down from \cell{2^n + 2^{n-1}+1}{0} (the body of configuration $S_{n+1}$).

\begin{proof}[Proof of Conjecture \ref{con2}]
For $n\ge 1$ let $A_n$ denote the number of ones (black cells) in the central
column of the
order $n$ stealth configuration in standard position, from \cell{1}{0} down to \cell{2^n + 2^{n-1}}{0}.  
By inspection we have $A_1 = 2$
and $A_2 = 4$.  From Theorem \ref{main} we have that $A_n = A_{n-1} + 2A_{n-2}$ for $n\ge 3$.  
It is easy to see that 
\begin{equation}
A_n = 2^n
\end{equation}
is the solution to this recurrence relation.  For completeness we set $A_0 = 1$.  
This is sequence \href{https://oeis.org/A000079}{A000079} in the OEIS \cite{OEIS}.

Now for $n\ge 1$ let $B_n$ denote the number of ones (black cells) on the row from left wingtip to right wingtip
of the order $n$ stealth configuration in standard position, from \cell{2^n}{-(2^n-1)} to  \cell{2^n}{2^n-1}.
By inspection we have $B_1 = 3$ and $B_2 = 5$.  From Theorem \ref{main} we have that $B_n = 2A_{n-1} + B_{n-2}$ for $n\ge 3$,
or $B_n = 2^n + B_{n-2}$.  Standard methods for recurrence relations yield the solution
\begin{equation}
B_n = \frac13\left( 2^{n+2} - (-1)^n\right),
\end{equation} 
which can be confirmed by induction.  For completeness we set $B_0= 1$. This result was first derived, using different methods
and different recurrences, by Goldwasser {\it et al.} \cite{Gold} as a special case of their Theorem 1.
This is sequence \href{https://oeis.org/A001045}{A001045} in the OEIS; its first few terms are 
1, 3, 5, 11, 21, 43,  85, 171, \dots\ .
\end{proof}

Let $S_n$ be the stealth configuration of order $n\ge 1$.
We call  the set of cells strictly within the rhombus linking cells \cell{0}{0}, 
\cell{2^{n-1}}{-2^{n-1}}, \cell{2^n}{0}, and \cell{2^{n-1}}{2^{n-1}} the {\it upper rhombus} of $S_n$.  Likewise, we call  the set of cells strictly within the rhombus linking cells 
\cell{2^{n-1}}{-2^{n-1}}, \cell{2^n}{-2^n}, \cell{2^n + 2^{n-1}}{-2^{n-1}}, and \cell{2^n}{0} the {\it left rhombus} of $S_n$.
%\end{defn}

\begin{lem}
\label{mirror}
For all $n\ge 1$, the upper rhombus of  stealth configuration $S_n$ is the mirror image of the left rhombus of $S_n$,
reflected along the line of cells \cell{2^{n-1}+k}{-2^{n-1}+k} for $0\le k\le 2^{n-1}$.
\end{lem}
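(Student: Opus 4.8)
The plan is to recognise the reflection in the statement as a symmetry of the ordinary integer grid that carries the rhombus rule to itself, and then to conclude by propagating that rule row by row.

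First I would name the reflection. On the whole grid it is the involution $\rho\cell{r}{c}=\cell{c+2^n}{r-2^n}$; it fixes every cell \cell{2^{n-1}+k}{-2^{n-1}+k}, and a direct computation shows it sends the corners \cell{0}{0}, \cell{2^{n-1}}{-2^{n-1}}, \cell{2^n}{0}, \cell{2^{n-1}}{2^{n-1}} of the upper rhombus of $S_n$ to \cell{2^n}{-2^n}, \cell{2^{n-1}}{-2^{n-1}}, \cell{2^n}{0}, \cell{2^n+2^{n-1}}{-2^{n-1}}, the corners of the left rhombus; hence $\rho$ carries the closed upper rhombus onto the closed left rhombus, fixing their common edge. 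The crucial observation is that, ignoring the skewed drawing, a 5-cell rhombus is nothing but a grid cell together with its four orthogonal neighbours, so the lattice reflection $\rho$ permutes these configurations; therefore a mod-2 labelling of the grid satisfies ``every 5-cell rhombus has even weight'' if and only if its $\rho$-image does. Writing $R$ for the Pascal rhombus (mod 2), extended by $0$ off the wedge, it follows that $\rho(R)$ obeys the rhombus rule at every 5-cell rhombus except the one centred at $\rho\cell{0}{0}=\cell{2^n}{-2^n}$, whereas $R$ obeys it everywhere except at the rhombus centred at \cell{0}{0}. Since $\rho(R)$ restricted to the left-rhombus region equals $\rho$ applied to the restriction of $R$ to the upper-rhombus region, the lemma is equivalent to the statement that $\rho(R)=R$ throughout the left rhombus; and because both exceptional centres \cell{0}{0} and \cell{2^n}{-2^n} are corners, within the left rhombus both $R$ and $\rho(R)$ obey the rhombus rule at every 5-cell rhombus whose centre is not the left corner \cell{2^n}{-2^n}.

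It then remains to show $R$ and $\rho(R)$ agree throughout the left rhombus. They agree along the edge \cell{2^{n-1}+k}{-2^{n-1}+k}, which lies in both regions and is fixed by $\rho$; and along the other top edge from \cell{2^{n-1}}{-2^{n-1}} to \cell{2^n}{-2^n} both are identically $0$, since that edge runs along the boundary $k=-n$ of the wedge where $R$ vanishes, and its $\rho$-preimage is the edge \cell{m}{-m} of the upper rhombus, again on the wedge boundary, where $R$ vanishes too. From these two edges the rhombus recurrence, applied in order of increasing row index, determines every remaining cell of the left rhombus; at each step the recurrence is an instance of the rhombus rule at a centre other than \cell{2^n}{-2^n}, so both $R$ and $\rho(R)$ obey it, and an induction on the row index yields $R=\rho(R)$ on the whole left rhombus.

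I expect the main obstacle to be verifying that this propagation closes up. As the left rhombus first widens and then narrows one must check that every parent cell invoked by the recurrence is either an already-determined cell of the region, a cell of one of the two seed edges, or a cell lying off the wedge --- hence $0$ in both $R$ and $\rho(R)$ --- and, in particular, that the recurrence never appeals to the 5-cell rhombus centred at the \emph{one} exceptional cell \cell{2^n}{-2^n} (nor at \cell{0}{0}). This demands a careful but routine case analysis along the four edges of the left rhombus, together with the fact, recalled in Section \ref{intro}, that \cell{0}{0} is the unique exceptional rhombus of the Pascal rhombus (mod 2).
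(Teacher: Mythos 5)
Your underlying observation is sound and genuinely different from the paper's argument: in $(n,k)$-coordinates the five cells of a construction rhombus are exactly a cell and its four orthogonal neighbours, so the lattice reflection $\rho\cell{r}{c}=\cell{c+2^n}{r-2^n}$ carries the ``even weight on every 5-cell rhombus'' property of $R$ to the same property for $\rho(R)$, with the unique exceptional centre moving from \cell{0}{0} to \cell{2^n}{-2^n}. The reduction of the lemma to $R=\rho(R)$ on the left rhombus is also correct. The gap is in the propagation step: your two seed edges do \emph{not} determine the left rhombus by the recurrence. The recurrence for a cell \cell{r}{k} calls on the parent \cell{r-1}{k+1}, which sits one step to the \emph{right}; for the interior cells of the left rhombus adjacent to the shared diagonal this parent lies strictly inside the \emph{upper} rhombus, which is neither a seed cell nor a previously determined cell of the left rhombus. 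Concretely, for $n=2$ the very first interior cell you would compute, \cell{3}{-2}, has the parent \cell{2}{-1}, which is interior to the upper rhombus; the same failure occurs at the rightmost interior cell of every row $2^{n-1}+1,\dots,2^n$. So the induction on the row index never gets started.

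Repairing this forces you to propagate $T=R+\rho(R)$ over the union of the two rhombi starting from the apex, and that introduces new boundary data you have not controlled: along the right wedge boundary one finds $T\cell{m}{m}=R\cell{2^n+m}{-(2^n-m)}$, i.e.\ the values of $R$ on the lower-left octagon edge of $S_n$, and at the apex the exceptional neighbourhood at \cell{0}{0} must cancel against $\rho(R)\cell{0}{1}=R\cell{2^n+1}{-2^n}=1$, which requires separately proving $R\cell{m}{-(m-1)}=1$ for $m\ge 2$. None of this is ``routine''; it amounts to proving auxiliary structural facts about $R$ that are roughly as hard as the lemma itself. The paper avoids the whole issue by inducting on $n$ through the stealth decomposition of Theorem \ref{main}: the upper rhombus is the nose ($S_{n-1}$) together with the upper rhombus of the main body ($S_{n-2}$), the left rhombus is the left wing together with the left rhombus of the main body, the nose reflects onto the left wing by the bilateral symmetry of stealth configurations, and the main-body pieces are handled by the inductive hypothesis. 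If you want to salvage your symmetry argument, you should either carry out the enlarged propagation with the extra boundary lemmas made explicit, or graft your reflection $\rho$ onto the decomposition of Theorem \ref{main} as the paper does.
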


\begin{proof}
The proof is by induction on $n$.  The claim is easily confirmed for $n= 1$ and 2.  We suppose the claim is true for all
stealth configurations of order less than $n$, for some arbitrary $n\ge 2$, and examine the stealth configuration $S_n$ of 
order $n$.  Clearly the upper rhombus of $S_n$ consists of the order $n-1$ stealth configuration forming the nose section of
$S_n$, together  with  the upper rhombus of the order $n-2$ configuration forming the main body section of $S_n$.  Likewise, the
left rhombus of $S_n$ consists of the order $n-1$ configuration forming the left wing section of $S_n$, together with the
left rhombus of the order $n-2$ configuration forming the main body section of $S_n$.  The basic bilateral symmetry of
stealth configurations insures that the nose section of $S_n$ reflects onto the left wing section of $S_n$, and the inductive
hypothesis insures that the upper rhombus of the main body section of $S_n$ reflects onto the left rhombus of the 
main body section of $S_n$.
\end{proof}

\begin{proof}[Proof of Conjecture \ref{con3}]
We will prove the somewhat stronger claim that for all $k\ge 1$, diagonal $D_k$ is periodic with period $2^{m+1}$, 
where $m=\lfloor \log_2(k)\rfloor$.  
In particular, we will show that within every stealth configuration $S_{m+n}$ with $n\ge 1$, diagonal $D_k$ is both periodic with period $2^{m+1}$ and a palindrome.  
The proof is by induction on $n$.

Let $D_k$ be a diagonal, with $m=\lfloor \log_2(k)\rfloor$.  We assume $k$ is even; the odd case is slightly different and
left to the reader.  $D_k$ contains $2^m$ cells within the upper rhombus of configuration $S_{m+1}$.  These are reflected
onto the next $2^m$ cells of $D_k$, lying within the left rhombus of $S_{m+1}$.  Together these $2^{m+1}$ cells, the
intersection of $D_k$ and $S_{m+1}$, form a
palindrome and by default are periodic of period $2^{m+1}$.  This establishes the claim within  $S_{m+1}$.

Now assume the claim is true within configuration $S_{m+n}$ for some arbitrary $n\ge 1$.  
By hypothesis, we know that within configuration
$S_{m+n}$,  diagonal $D_k$ is a palindrome and periodic of period $2^{m+1}$.  Now configuration $S_{m+n}$ is contained within
the upper rhombus of configuration
$S_{m+n+1}$ and thus reflects onto the left rhombus of $S_{m+n+1}$ forming a palindrome twice as long.  But the mirror image of a
palindrome is itself, so the intersection of $D_k$ and $S_{m+n+1}$ is just the doubling of $D_k$ within $S_{m+n}$.  It
is both periodic with period $2^{m+1}$ and a palindrome, confirming that the claim is true within $S_{m+n+1}$.
\end{proof}

Before proving Conjecture \ref{con4}, we examine the density of ones in the order $n$ stealth configuration.

\begin{thm}
\label{density}
The density of ones in the order $n$ stealth configuration approaches a limit of 0 as $n$ approaches infinity.
\end{thm}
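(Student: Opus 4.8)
The plan is to compare two exponential growth rates: that of the number of ones in $S_n$, which is governed by a linear recurrence whose dominant root lies below $4$, and that of the total number of cells occupied by $S_n$, which is of order $4^n$ because $S_n$ fills out a two-dimensional region of linear size $\sim 2^n$. Write $N_n$ for the number of ones (black cells) in $S_n$ and $C_n$ for the number of cells of the Pascal rhombus lying strictly within the bounding octagon of $S_n$, so that the density in question is $N_n/C_n$.

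For the numerator I would invoke Theorem \ref{main}, which presents $S_n$ as a disjoint union of three order-$(n-1)$ stealth configurations (nose, left wing, right wing) and two order-$(n-2)$ ones (main body, tail). The number of ones in a stealth configuration is invariant under the translations and rotations used to place it, so each piece contains $N_{n-1}$ or $N_{n-2}$ ones according to its order; hence $N_n = 3N_{n-1} + 2N_{n-2}$ for $n\ge 2$, with $N_0 = 1$ and $N_1 = 4$ read off directly. (Should one be cautious about the word ``disjoint'', the inequality $N_n\le 3N_{n-1}+2N_{n-2}$ still holds, and that is all that is used below.) The characteristic polynomial $x^2-3x-2$ has roots $(3\pm\sqrt{17})/2$, and since $\sqrt{17}<5$ both roots have absolute value strictly less than $4$; therefore $N_n = O(\lambda^n)$ with $\lambda = (3+\sqrt{17})/2\approx 3.56$. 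This runs parallel to the treatment of $A_n$ and $B_n$ in the proof of Conjecture \ref{con2}.

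For the denominator a lower bound of the right order suffices. Every cell \cell{r}{k} of the Pascal rhombus with $1\le r\le 2^n-1$ lies strictly within the octagon of $S_n$, since it lies strictly between the two leading edges (the lines $k=-r$ and $k=r$ issuing from \cell{0}{0}) and strictly above the entire trailing sawtooth --- every vertex of which, and hence the whole trailing boundary, lies in a row $\ge 2^n$. As row $r$ has $2r-1$ cells, $C_n \ge \sum_{r=1}^{2^n-1}(2r-1) = (2^n-1)^2$. (In fact $C_n = \tfrac{13}{8}\,4^n + O(2^n)$, the leading term being the area of the octagon, but this precision is not needed.) Combining the two bounds, $0\le N_n/C_n \le c\,\lambda^n/(2^n-1)^2\to 0$ since $\lambda/4<1$, which is the assertion of the theorem. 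The only substantive ingredients are the recurrence for $N_n$ and the fact that its growth constant is safely below $4$; conceptually, the self-similar five-fold decomposition reproduces only the black cells, whereas density is measured against the ambient two-dimensional octagon, whose $\Theta(4^n)$ cells swamp the $\approx 3.56^n$ ones.
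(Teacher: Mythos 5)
Your proposal is correct and follows essentially the same route as the paper: both derive the recurrence (number of ones in $S_n$) $= 3\cdot(\text{ones in }S_{n-1}) + 2\cdot(\text{ones in }S_{n-2})$ from Theorem \ref{main}, giving growth rate $(3+\sqrt{17})/2 < 4$, and both compare this against the $\Theta(4^n)$ cell count of the bounding octagon. The only cosmetic differences are that the paper writes out the exact closed form for the number of ones and the exact cell count $\frac{13}{8}4^n - 2\cdot 2^n + 1$, whereas you content yourself with an $O(\lambda^n)$ upper bound and the lower bound $(2^n-1)^2$, which suffices.
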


\begin{proof}
Let $C_n$ denote the total number of ones (black entries) in the order $n$ stealth
configuration.  By inspection we have $C_0 = 1$ and  $C_1 = 4$. From Theorem \ref{main} we know that $C_n = 3C_{n-1} + 2C_{n-2}$
for all $n\ge 2$.  The solution to this recurrence relation is
\begin{equation}
C_n = \frac{17 + 5\sqrt{17}}{34}\left(\frac{3+\sqrt{17}}{2}\right)^n + \frac{17 - 5\sqrt{17}}{34}
\left(\frac{3 - \sqrt{17}}{2}\right)^n
\end{equation}
which can be confirmed by induction.  This is sequence \href{https://oeis.org/A055099}{A055099} in the OEIS;  its first few terms are 1, 4, 14, 50, 178, 634, 2258, 8042, \ldots\ .

Now let $D_n$ denote the total number of entries, both zeros and ones, in the order $n$ stealth configuration $S_n$. 
Note that a triangle of $k$
rows contains $1+3+5+\cdots + (2k-1) = k^2$ cells. For $n\ge 2$, the top $2^n$ rows of $S_n$
contain $(2^n)^2$ cells.  The bottom part of $S_n$ consists of a triangle of $2^n-1$ rows, minus a triangle of
$2^{n-1}$ rows, minus two triangles of $2^{n-2}$ rows.  Adding and subtracting all the parts, we have
\begin{equation}
\begin{split}
D_n &= (2^n)^2 + (2^n-1)^2 - (2^{n-1})^2 - 2(2^{n-2})^2\\
     &= \frac{13}{8}\left(4^{n}\right) - 2\left(2^{n}\right) + 1
\end{split}
\end{equation}
for $n\ge 2$, with $D_0 =  1$  and $D_1 = 4$.
This is sequence \href{https://oeis.org/A256959}{A256959} in the OEIS;  its first few terms are 1, 4, 19, 89, 385, 1601, 6529, 26369, \ldots\ .

The density of the order $n$ stealth configuration is $C_n/D_n$, which clearly approaches a limit of 0.
\end{proof}

\begin{proof}[Proof of Conjecture \ref{con4}]

Let $E_n$ denote the number of ones in the first $2^n$ rows of the order $n$ stealth configuration.  
From Theorem \ref{main} we know that this triangle consists of an order
$n-1$ stealth configuration for its nose section, containing $C_{n-1}$ ones; the first $2^{n-2}$ rows of an order $n-2$ stealth configuration for its
middle section, containing $E_{n-2}$ ones;  approximately half of an order $n-1$ stealth configuration for its left part; and
approximately half of an order $n-1$ stealth configuration for its right part.  We say ``approximately'' here because both
the left part and the right part contain the spine of the order $n-1$ stealth configuration, so the left and right parts together contain $C_{n-1} + A_{n-1}$ ones. See Figure \ref{triangle}.

\begin{figure}[h!]
\centerline{\includegraphics[width=2.47in,height=1.31in,keepaspectratio]{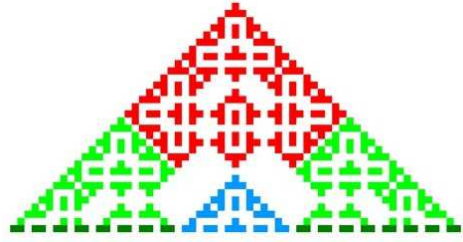}}
\caption{Decomposing the first $2^n$ rows}
\label{triangle}
\end{figure}

Summing, we have
\begin{equation*}
E_n = 2C_{n-1} + E_{n-2} + A_{n-1}
\end{equation*}
for $n\ge 2$, with $E_0=1$ and $E_1=4$.  The solution to this recurrence relation is

\begin{equation}
\label{E}
\begin{split}
E_n &= \frac{17 + 7\sqrt{17}}{68} \left(\frac{3+\sqrt{17}}{2}\right)^n
    + \frac{17 - 7\sqrt{17}}{68} \left(\frac{3-\sqrt{17}}{2}\right)^n\\
    &\quad + \frac{2^{n+2} - (-1)^n}{6}.
\end{split}
\end{equation}  
This result was first derived, using different methods and different recurrences, 
by Goldwasser {\it et al.} \cite[Equation (21)]{Gold}.  This is sequence \href{https://oeis.org/A256960}{A256960} in the OEIS;
its first few terms are 1, 4,  11, 36, 119, 408, 1419, 4988, 17631, ... . 

From the discussion above we know that $F_n$,  the total number of cells in the first $2^n$ rows of the Pascal
rhombus (mod 2), is 
\begin{equation}
F_n =4^n,
\end{equation}
which is sequence \href{http://oeis.org/A000302}{A000302}. The density of ones in the first $2^n$ rows is then $E_n/F_n$, which approaches a limit of 0.%Now %

Now let $m$ be any positive integer, and let $n = \lfloor \log_2(m)\rfloor$, so that
$2^n \le m < 2^{n+1}$.  The first $m$ rows of the Pascal rhombus (mod 2) contain fewer than $E_{n+1}$ ones out of at least
$F_n$ total cells.  The density of ones in the first $m$ rows is thus less than $E_{n+1}/F_n$, which approaches 0 as a limit.
This is equivalent to the statement of Conjecture \ref{con4}.
\end{proof}

\section{Other work}
Moshe \cite{Moshe1} placed the Pascal rhombus (mod 2) in the general context of double linear recurrence 
sequences over finite fields.  Using extensions of the methods of Goldwasser {\it et al.} \cite{Gold}, he
proved that the number of ones, say, in the first $q^n$ rows of such a structure, where $q$ is the
order of the field, can be determined from the $n$-th power of a certain square matrix.  The 5$\times$5 matrix corresponding 
to the Pascal rhombus (mod 2) is explicitly displayed in Finch \cite{Finch}.  The expression in Equation (\ref{E}) for the
sequence $E_n$ can be derived from these two works.

In a later paper, Moshe \cite{Moshe2} proved that the number of ones in row $m$ of the structure can be determined from a product of matrices, one for each digit in the base-$q$ representation of $m$.  Again,  the relevant 
5$\times$5 matrices are displayed in Finch \cite{Finch}.  From these it is possible to compute the sequence 1, 3, 2, 5, 5, 6, 3, 11, 4, 15, 7, 10, \dots\ , (sequence \href{https://oeis.org/A059319}{A059319}), but no nice closed form expression for these numbers is known.

More recently, stealth configurations have been independently discovered by Sloane \cite{Neil} in his exploration of 2-dimensional cellular automata. In examining the evolution of the ``odd-rule'' cellular automaton using the centered von Neumann neighborhood and starting with a single ON cell at time 0, he found that the ON cells in generation $2^n-1$ form a diamond shaped
pattern $H_n$ consisting of four order $n-1$ stealth configurations $S_{n-1}$, one in each corner facing out, 
together with a copy of $H_{n-2}$ in the center.  Alternatively, we can view Sloane's $H_n$ pattern as the diamond formed
from the first $2^n$ rows
of the Pascal rhombus (mod 2) and its reflection in its bottom row.  Sloane calls our stealth configurations {\it haystacks}, and indicates the decomposition of a haystack into five smaller haystack in \cite[Line (35)]{Neil}.  The appearance of our
stealth configurations and his haystacks in rather different settings, analyzing different problems, seems quite remarkable.

%
%Ratio of diamond to stealth is (9-sqrt{17})/4  = 1.2192235935955848625446475360065 (16/13 = 1.2307692307692307692307692307692).
%Ratio of stealth to diamond is (9+squr{17})/16 = 0.8201941016011037843638381159984 (13/16 = 0.8125).

\noindent\hrulefill

\footnotesize\noindent 2010  {\it Mathematics Subject Classification}: Primary 05A10, Secondary 11B50.

\footnotesize\noindent{\it Keywords}: Pascal, triangle, rhombus.

\end{document}